\newcommand{\nsphere}{\mathbb{S}^{n-1}}
\newcommand\nspace{\mathbb{R}^n}
\newcommand\innerproduct[2]{\langle {#1},{#2} \rangle}
\newcommand{\at}{\makeatletter @\makeatother}
\newtheorem{theorem}{Theorem}
\newtheorem{lemma}[theorem]{Lemma}
\title{A note on spherical discrepancy}
\author{Yossi Lonke}
\date{}							
\begin{document}
\begin{titlepage}
  \maketitle
  \begin{abstract}
  A non-algorithmic, generalized version of a recent result, asserting that a natural relaxation of the Koml\'os conjecture
 from boolean discrepancy to spherical discrepancy is true, is proved by a very short argument using convex geometry.
  \end{abstract}
     Keywords: Spherical Discrepancy
\vfill
Yossi Lonke\\
yossilonke\at me.com \\
Independent Research \\
13 Basel Street, Tel-Aviv ,Israel\\
Orcid Id: 0000-0001-7493-5085\\
\end{titlepage}

\section*{}

The term  \emph{spherical discrepancy}  has been recently introduced (\cite{JM}) in the context of an optimization problem on the unit sphere
$\mathbb{S}^{n-1}=\{x\in\mathbb{R}^n: \|x\|_2=1\}$. Given a collection of unit vectors $u_1,\dots,u_m$ in $\nsphere$  the problem is to minimize 
\begin{equation}\label{eq:minProb}
 \min_{x\in\nsphere}\max_i\innerproduct{u_i}{x}. 
 \end{equation}
Spherical discrepancy is a relaxation of boolean discrepancy, whereby the minimization in (\ref{eq:minProb}) is to be carried over the set of sign-vectors $\{-1,1\}^n$, instead of over the sphere.
A famous conjecture made by Koml\'os,\footnote{There appears to be no source written by Koml\'os himself containing this conjecture.} is that whenever
the vectors $u_i$  have Euclidean norm at most $1$, one can find a sign-vector $x\in\{-1,1\}^n$ such that the maximum in (\ref{eq:minProb}) is $O(1)$. Arranging the vectors $u_i$ as $n$ columns of an $m\times n$ matrix, say $W$, the problem is to minimize $\|Wx\|_\infty$ over the sign-vectors $x\in \{-1,1\}^n$. It can be assumed that $m=n$ (\cite{S}, Section~4). The Koml\'os conjecture has already resisted the efforts of two generations of mathematicians, hence there is a growing body of "relaxed Komlo\'s conjectures", which replace the set $\{-1,1\}^n$ by larger sets, or by other objects. See \cite{CGRT}, \cite{N}.
One such relaxed Koml\'os problem is discussed in \cite{JM}. Section~4 thereof contains the following result. 
\bigskip

\begin{theorem} Let $w_1,\dots, w_n$ be vectors with $\|w_i\|_2\leq 1$, and let $W$ be the matrix with the $w_i$ as columns. Then we can find a unit vector $x\in\nsphere$ such that
\begin{equation}
\|Wx\|_\infty = O(\frac{1}{\sqrt{n}})
\end{equation}
in time polynomial in $n$.
\end{theorem}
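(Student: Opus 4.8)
The plan is to recast the min-max quantity as the inradius of a symmetric polytope and then bound that inradius by a one-line volume comparison. By the cited reduction we may assume $W$ is square, $n\times n$. Writing $R_1,\dots,R_n\in\nspace$ for the rows of $W$, we have, for every $x\in\nsphere$,
\[
\|Wx\|_\infty=\max_{1\le k\le n}|\innerproduct{R_k}{x}|=h_Z(x),
\]
the value of the support function of the origin-symmetric polytope $Z=\mathrm{conv}\{\pm R_1,\dots,\pm R_n\}$. Hence the theorem amounts to showing $\min_{x\in\nsphere}h_Z(x)=O(1/\sqrt n)$, and since $\min_{x\in\nsphere}h_Z(x)$ is exactly the inradius of $Z$ (the largest $\rho$ with $\rho B_2^n\subseteq Z$, using $K\subseteq L\iff h_K\le h_L$ together with the symmetry of $Z$), it suffices to bound this inradius.

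Next I would exploit that $Z$ is a linear image of the cross-polytope. Since $R_k=W^\top e_k$, we get $Z=W^\top B_1^n$, where $B_1^n=\mathrm{conv}\{\pm e_1,\dots,\pm e_n\}$ is the unit $\ell_1$-ball, of volume $2^n/n!$. Therefore
\[
\mathrm{vol}(Z)=|\det W|\,\frac{2^n}{n!}\le\frac{2^n}{n!},
\]
the last inequality being Hadamard's inequality applied to the columns $w_i$ of $W$, which satisfy $\|w_i\|_2\le 1$. If $\rho$ is the inradius of $Z$ then $\rho B_2^n\subseteq Z$ gives $\omega_n\rho^n\le\mathrm{vol}(Z)$, where $\omega_n=\mathrm{vol}(B_2^n)$, whence
\[
\rho\le\left(\frac{2^n}{n!\,\omega_n}\right)^{1/n}.
\]
A Stirling estimate ($n!\ge(n/e)^n$ and $\omega_n^{1/n}\sim\sqrt{2\pi e/n}$) turns the right-hand side into $\sqrt{2e/\pi}\,/\sqrt n=O(1/\sqrt n)$, and the unit vector realizing the minimum of $h_Z$ achieves $\|Wx\|_\infty=O(1/\sqrt n)$, completing the existence proof.

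The substantive step is the reformulation in the first paragraph: once one sees that the target quantity is the inradius of $\mathrm{conv}\{\pm R_k\}$ and that this polytope is the image $W^\top B_1^n$, everything else is Hadamard plus Stirling, with no genuine analytic obstacle. The two points to handle with care are the reduction to the square case (so that the clean volume identity $\mathrm{vol}(W^\top B_1^n)=|\det W|\,2^n/n!$ is available) and confirming that the Stirling constant is finite and dimension-free. The real gap relative to the stated theorem is algorithmic: this convex-geometric argument only certifies \emph{existence} of a good $x$, namely the minimizer of $h_Z$ over the sphere, and does not by itself supply a polynomial-time procedure; recovering the ``in time polynomial in $n$'' clause would require separately arguing that the inradius direction, or a near-optimal $x$, can be computed efficiently. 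This is consistent with the paper's description of the argument as non-algorithmic.
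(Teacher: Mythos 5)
Your argument is correct as an existence proof, and it is essentially the dual mirror of the paper's. The paper isolates a general volume-comparison lemma: for norms with unit balls $K,L$ and nonsingular $W$, the minimum $\alpha=\min_{\|x\|_K=1}\|Wx\|_L$ satisfies $\alpha B_L\subseteq W(B_K)$, hence $\alpha\le\left(\mathrm{Vol}(W(B_K))/\mathrm{Vol}(B_L)\right)^{1/n}$; it then instantiates $K=B_2^n$, $L=B_\infty^n$, i.e.\ it inscribes the cube $\alpha B_\infty^n$ in the ellipsoid $W(B_2^n)$. You instead inscribe a Euclidean ball in the cross-polytope image $Z=W^{\top}B_1^n$; since $\min_{x\in\nsphere}\|Wx\|_\infty=r(Z)=\min_{\|y\|_1=1}\|W^{\top}y\|_2$, your computation amounts to the paper's lemma applied to $W^{\top}$ with $(K,L)=(B_1^n,B_2^n)$. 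From there the two proofs coincide: Hadamard on the columns, then Stirling. Your dual instantiation does buy two small things. First, the identity $\mathrm{Vol}(W^{\top}B_1^n)=|\det W|\,2^n/n!$ and the comparison $\omega_n\rho^n\le\mathrm{Vol}(Z)$ remain valid verbatim when $W$ is singular (both sides vanish, forcing $\rho=0$), so you could dispense entirely with the reduction to the nonsingular case, whereas the paper's lemma genuinely needs $W^{-1}$ and hence a perturbation-to-general-position step. Second, your constant $\sqrt{2e/\pi}\approx 1.32$ improves on the paper's $\sqrt{2\pi e}/2\approx 2.07$, though this is immaterial at the $O(1/\sqrt{n})$ level. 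Your closing caveat matches the paper's own position exactly: the paper proves only the nonconstructive statement (its Theorem~3, in fact under the weaker hypothesis that the \emph{mean} of the $\|w_i\|_2$ is at most $1$, obtained by appending the AM--GM inequality to Hadamard --- a generalization your Hadamard step would absorb with the same one-line change), and the ``in time polynomial in $n$'' clause of Theorem~1 is supplied only by the constructive algorithm of Jones and McPartlon, which neither you nor the paper reproduces.
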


The proof of Theorem 1 in  \cite{JM} is constructive, i.e. it presents an algorithm, running in polynomial time, that actually finds a minimizer. The purpose of this note is to demonstrate that if one is merely interested in the \emph{existence} of a minimizer, then a much simpler argument, (and slightly more general), is possible. It is based on the following observation. 
\begin{lemma} Let $K,L$ be two unit balls of norms in $\nspace$, 
and let $W$ be an $n\times n$ nonsingular matrix. Then
\begin{equation}\label{eq:myIneq}
\min_{\|x\|_K=1}\|Wx\|_L \leq \left(\frac{\hbox{\rm Vol}\,(W(B_K))}{\hbox{\rm Vol}\,(B_L)}\right)^{1/n}
\end{equation}
\end{lemma}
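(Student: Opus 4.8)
The plan is to recognize the left-hand side as an inradius-type quantity and then reduce (\ref{eq:myIneq}) to a one-line volume comparison. Set $r=\min_{\|x\|_K=1}\|Wx\|_L$. Since $\partial B_K$ is compact and $x\mapsto\|Wx\|_L$ is continuous, this minimum is attained; and because $W$ is nonsingular we have $Wx\neq 0$ whenever $x\neq 0$, so $r>0$. The first step is to upgrade the defining inequality from the boundary of $B_K$ to all of $\nspace$: using positive homogeneity of both norms together with the linearity of $W$, one factors any $x\neq 0$ as $x=\|x\|_K\cdot(x/\|x\|_K)$ to obtain
\begin{equation}\label{eq:homog}
\|Wx\|_L \geq r\,\|x\|_K \qquad \text{for all } x\in\nspace.
\end{equation}

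The second step is to read (\ref{eq:homog}) as an inclusion of convex bodies, namely $r\,B_L\subseteq W(B_K)$. To see this, take any $y$ with $\|y\|_L\leq r$ and put $x=W^{-1}y$; then (\ref{eq:homog}) gives $r\,\|x\|_K\leq\|Wx\|_L=\|y\|_L\leq r$, whence $\|x\|_K\leq 1$, so $x\in B_K$ and $y=Wx\in W(B_K)$. Geometrically this just says that $r\,B_L$ is the largest dilate of $B_L$ inscribed in the body $W(B_K)$.

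The final step is the volume comparison. As the unit ball of a norm, $B_L$ is a convex body of positive finite volume, and dilating by $r$ scales its volume by $r^n$; monotonicity of volume under the inclusion from the previous step therefore yields $r^n\,\mathrm{Vol}(B_L)\leq\mathrm{Vol}(W(B_K))$, and extracting $n$-th roots gives precisely (\ref{eq:myIneq}). There is no substantive obstacle here: the only points demanding care are the attainment and strict positivity of $r$, which are exactly what justify the division by $\|x\|_K$ in the homogenization step and are guaranteed, respectively, by compactness of $\partial B_K$ and nonsingularity of $W$.
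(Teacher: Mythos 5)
Your proof is correct and takes essentially the same route as the paper: both reduce the lemma to the inclusion $r\,B_L\subseteq W(B_K)$ and finish by monotonicity and scaling of volume. The only difference is how the inclusion is verified---you homogenize the minimum to $\|Wx\|_L\geq r\,\|x\|_K$ on all of $\mathbb{R}^n$ and check membership directly, while the paper argues by contradiction via a boundary-crossing point; your direct version is, if anything, slightly cleaner.
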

\begin{proof}
The minimum $\alpha=\min_{\|x\|_K=1}\|Wx\|_L$ is attained by compactness, and is positive because $W$ is nonsingular. If ${\alpha y\notin W(K)}$ for some $y\in L$, then
for some $0<|t|<1$ the vector $t\alpha y$ would belong to the boundary of $W(K)$, hence the vector $z=W^{-1}(t\alpha y)$ would belong to the boundary of~$K$, but then $\|Wz\|_L<\alpha$, contradicting
 the minimality of~$\alpha$. Therefore,  we must have $\alpha L\subset W(K)$. Taking the volumes of the two sides of this inclusion yields inequality (\ref{eq:myIneq}). 
\end{proof}
A nonconstructive version of a slightly more general theorem than Theorem 1 follows easily from Lemma 2.
\begin{theorem} Let $w_1,\dots, w_n$ be vectors whose mean Euclidean norm is at most $1$, and let $W$ be the matrix with the $w_i$ as columns. Then there exists a unit vector $x\in\nsphere$ such that
\begin{equation}
\|Wx\|_\infty = O(\frac{1}{\sqrt{n}})
\end{equation}
\end{theorem}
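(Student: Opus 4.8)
The plan is to apply Lemma 2 with the two norms chosen so that the left-hand side of (\ref{eq:myIneq}) becomes exactly the quantity to be bounded. Take $K=B_2$, the Euclidean unit ball, so that the constraint $\|x\|_K=1$ is precisely $x\in\nsphere$, and take $L=B_\infty=[-1,1]^n$, the unit ball of the sup-norm, so that $\|Wx\|_L=\|Wx\|_\infty$. If $W$ is singular the claim is trivial, since then its kernel meets the sphere and $\|Wx\|_\infty=0$ for some unit $x$; so I may assume $W$ is nonsingular and invoke the lemma, which yields
\begin{equation}
\min_{x\in\nsphere}\|Wx\|_\infty\;\leq\;\left(\frac{\mathrm{Vol}(W(B_2))}{\mathrm{Vol}(B_\infty)}\right)^{1/n}.
\end{equation}

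The next step is to evaluate the right-hand side. Since $\mathrm{Vol}(W(B_2))=|\det W|\,\mathrm{Vol}(B_2)$ and $\mathrm{Vol}(B_\infty)=2^n$, while $\mathrm{Vol}(B_2)=\pi^{n/2}/\Gamma(\tfrac{n}{2}+1)$, the bound factorises as
\begin{equation}
\min_{x\in\nsphere}\|Wx\|_\infty\;\leq\;|\det W|^{1/n}\cdot\sqrtpiOverTwo\cdot\Gamma\!\left(\tfrac{n}{2}+1\right)^{-1/n}.
\end{equation}
The determinantal factor is controlled by the hypothesis on the columns: Hadamard's inequality gives $|\det W|\leq\prod_{i=1}^n\|w_i\|_2$, and the AM--GM inequality bounds this by $\left(\tfrac{1}{n}\sum_i\|w_i\|_2\right)^n$, which is at most $1$ precisely because the mean Euclidean norm of the columns is at most $1$. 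Hence $|\det W|^{1/n}\leq1$, and this is exactly the point at which the weaker ``mean norm'' hypothesis, rather than the individual bound $\|w_i\|_2\le1$, is used.

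It remains to show that the surviving factor is $O(1/\sqrt{n})$, which is a routine Stirling estimate: $\Gamma(\tfrac{n}{2}+1)^{1/n}$ grows like $\sqrt{n/(2e)}$, so that $\sqrtpiOverTwo\cdot\Gamma(\tfrac{n}{2}+1)^{-1/n}$ is asymptotic to $\sqrt{\pi e/2}\cdot n^{-1/2}$, yielding the claimed $O(1/\sqrt{n})$ with an explicit absolute constant. I do not expect any genuine obstacle here: once the norms are chosen correctly, the whole argument collapses to the well-known smallness of the volume ratio of the Euclidean ball to the cube. The only points requiring a little care are the reduction to the nonsingular case and the verification that the AM--GM step is exactly what converts the mean-norm hypothesis into $|\det W|\le1$; the asymptotic analysis itself is entirely standard.
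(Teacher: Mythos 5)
Your proposal is correct and follows essentially the same route as the paper: Lemma~2 with $K=B_2^n$ and $L=B_\infty^n$, then Hadamard's inequality combined with AM--GM to get $|\det W|^{1/n}\le \frac{1}{n}\sum_i\|w_i\|_2\le 1$, and Stirling's formula for $\Gamma(\tfrac{n}{2}+1)^{1/n}$ (your constant $\sqrt{\pi e/2}$ agrees with the paper's $\sqrt{2\pi e}/2$). The only difference is cosmetic: you dispose of singular $W$ directly via a kernel vector on the sphere, which is in fact slightly cleaner than the paper's perturbation argument.
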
 
\begin{proof}
There is no loss of generality in assuming that $W$ is nonsingular, because we can 
perturb the $w_i$'s so that they are in general position while keeping their norm.
Take $K=B_2^n$, the Euclidean unit ball, and $L=B_\infty^n$, the unit ball of the $\|\cdot\|_\infty$ norm, i.e., the cube $[-1,1]^n$. We have 
\[\textrm{Vol}\,B_2^n=\frac{\pi^{n/2}}{\Gamma(\frac{n}{2}+1)}\quad\textrm{and}\quad \textrm{Vol}\,B_\infty^n=2^n. \]
By Lemma~2 and Stirling's formula,
\begin{equation}\label{eq:estimate}
\min_{\|x\|_2=1}\|Wx\|_\infty\leq (|\det W|)^{1/n}\frac{\sqrt{\pi}}{2\Gamma(\frac{n}{2}+1)^{1/n}}\sim (|\det W|)^{1/n}\frac{\sqrt{2\pi e}}{2\sqrt{n}}
\end{equation}
(where the symbol $\sim$ signifies that the two sides are asmyptotically equivalent as $n\to\infty$.)
By Hadamard's inequality for the determinant,
\[ (|\det{W}|)^{1/n}\leq \left(\prod_{i=1}^n\|w_i\|_2\right)^{1/n} \leq\frac{1}{n}\sum_{i=1}^n\|w_i\|_2\]
Hence, by (\ref{eq:estimate}) and the hypothesis, 
\[\min_{\|x\|_2=1}\|Wx\|_\infty=O(\frac{1}{\sqrt{n}}).\]
\end{proof}

\end{document}